\newtheorem{Remark}{Remark}
\newtheorem{Definition}{Definition}
\newenvironment{Proof}{\noindent{\em Proof:\/}}{\hfill $\Box$\par}
\newtheorem{Theorem}{Theorem}
\newtheorem{Assumption}{Assumption}
\title{\LARGE \bf
Adaptive Approaches for Fully Distributed Nash Equilibrium Seeking in Networked Games}
\author{Maojiao Ye and Guoqiang Hu
\thanks{M. Ye is with the School of Automation, Nanjing University of Science and Technology, Nanjing 210094, P.R. China (Email: mjye@njust.edu.cn).
G. Hu is with the School of Electrical and Electronic Engineering, Nanyang Technological University, 639798, Singapore (Email: gqhu@ntu.edu.sg).
}
\thanks{This work is supported by the Natural Science Foundation of China (NSFC) under Grant 61803202, the Natural Science Foundation of Jiangsu Province, No. BK20180455 and the Fundamental Research Funds for the Central Universities, No. 30918011332.}
}
\begin{document}

\maketitle
\thispagestyle{empty}
\pagestyle{empty}

\begin{abstract}
This paper considers the design of fully distributed Nash equilibrium seeking strategies for multi-agent games. To develop fully distributed seeking strategies, two adaptive control laws, including a node-based control law and an edge-based control law, are proposed. In the node-based adaptive strategy, each player adjusts their own weight on their procurable  consensus error dynamically. Moreover, in the edge-based algorithm, the fully distributed strategy is designed by adjusting the weights on the edges of the communication graph adaptively. By utilizing LaSalle's invariance principle, it is shown that the Nash equilibrium is globally asymptotically stable by both strategies given that the players' objective functions are twice-continuously differentiable, the partial derivatives of the players' objective functions with respect to their own actions are globally Lipschitz, the pseudo-gradient vector of the game is strongly monotone and the communication network is undirected and connected. In addition, we further show that the edge-based method can be easily adapted to accommodate time-varying communication conditions, in which the communication network is switching among a set of undirected and connected graphs. In the last, a numerical example is given to illustrate the effectiveness of the proposed methods.
\end{abstract}

\begin{keywords}
fully distributed Nash equilibrium seeking; edge-based adaptive control law; node-based adaptive control law; multi-agent games.
\end{keywords}

\section{INTRODUCTION}
Distributed Nash equilibrium seeking for multi-agent games becomes a thriving research topic in recent years (see e.g., \cite{YECYBER2017}-\cite{YECDC19} for more comprehensive literature review). In particular, continuous-time consensus-based algorithms are shown to be very powerful and effective methods that can solve networked games with incomplete information. For example, networked aggregative games can be addressed by combing dynamic average consensus protocols with the gradient methods \cite{YECYBER2017}\cite{LiangAuto2017}\cite{Dentnnls}. Adaptations to general multi-agent games can be achieved by utilizing gradient search algorithms and leader-following consensus protocols \cite{YETAC2017}\cite{YECYBER2018}\cite{LuTcyber}\cite{YECDC19}. Likewise, $N$-cluster games were solved in \cite{YEAUTO2018}-\cite{YETAC2019} by taking the advantages of average consensus and the gradient search. Roughly speaking, the main idea behind the design of the consensus-based Nash equilibrium seeking algorithms in many existing works can be summarized as follows.
\begin{enumerate}
  \item Employ consensus protocols, either leader-following consensus protocols or dynamic average consensus protocols, to estimate information required for the optimization of the players' objective functions.
  \item Based on the estimated information, each player updates its own action by utilizing the gradient search based optimization techniques.
\end{enumerate}

Most importantly, to ensure the convergence of the closed-loop system, one should include a control gain (singular perturbation parameter) to ensure that the consensus part would be faster than the optimization part. In this way, the consensus-based algorithms are actually in two-time scales and they have advantages due to their simplicity and effectiveness. However, one drawback for this class of methods is that the explicit quantification of the control gain (singular perturbation parameter) depends on the Lipschitz constants of the players' objective functions, the topologies of the communication graphs and the number of players in the game. It should be noted that in many practical situations, these centralized information can hardly be obtained by all the players for games under distributed communication networks. Moreover, as the choices of the control gain (singular perturbation parameter) depend on the Lipschitz constants as well as the network topology, these methods loss plug-in plug-out property. More specifically, if there exist players that join the game or leave the game, the singular perturbation parameter should be re-tuned to ensure the stability of the new Nash equilibrium.

Actually, in \cite{LuTcyber}, the authors tried to adopt a decaying control gain which is not integrable but square integrable to remove the centralized control gain. Unfortunately, the decaying control gain is shared among all the players' control strategies, indicating that the players need to coordinate on it.
In fact, the authors in \cite{KoshalOR16}
showed that in the discrete-time scenario, exact convergence to the Nash equilibrium can be obtained by utilizing the method proposed therein if coordinated stepsize is adopted while only convergence to a neighborhood of the Nash equilibrium can be concluded if the stepsize is uncoordinated. Similarly, the method in \cite{Dentnnls} not only requires the determination of the singular perturbation parameter but also some other shared control gains. In
\cite{LiangAuto2017}, the authors adopted several coordinated control gains  among all the players' seeking strategies, whose quantifications also require centralized information such as the number of players and the maximum value of the players' objective functions. \textbf{In short, most of the existing literature on continuous-time consensus-based Nash equilibrium seeking strategies require the determinations of some coordinated control gains, which possibly depend on centralized information, to ensure the convergence of the closed-loop system.} In this regard, these methods are not fully distributed.  Motivated by the above observations, this paper intends to design fully distributed Nash equilibrium seeking strategies in which the control gains are distributed and are independent of  any centralized information.

To establish fully distributed Nash equilibrium seeking strategies, the main idea of this paper is to adjust the control gain (singular perturbation parameter) adaptively. In comparison with the existing works, the main contributions of the paper are summarized as follows.
\begin{enumerate}
  \item A node-based adaptive strategy and an edge-based adaptive strategy are proposed to achieve the fully distributed seeking of Nash equilibrium in multi-agent games. The node-based adaptive strategy achieves the goal by adaptively updating each player's weight on its overall procurable consensus error. Moreover, the edge-based adaptive strategy achieves fully distributed Nash equilibrium seeking through adaptively adjusting the weights on the edges of the communication graphs.
  \item Based on the LaSalle's invariance principle, it is theoretically shown that the Nash equilibrium is globally asymptotically stable under both adaptive seeking strategies given that the players' objective functions are twice-continuously differentiable, the partial derivatives of the players' objective functions with respect to their own actions are globally Lipschitz, the pseudo-gradient vector of the game is strongly monotone and the communication network is undirected and connected.
  \item Extensions to time-varying communication topologies where the communication network switches among a set of undirected and connected graphs is discussed for the edge-based Nash equilibrium seeking algorithm. It is shown that the edge-based algorithm can be easily adapted to accommodate time-varying communication graphs.
\end{enumerate}

We proceed the remainder of the paper as follows.  In Section \ref{NP}, the motivation of the paper is explained and the problem considered in this paper is given. Section \ref{main} presents the main results of the paper in which a node-based seeking strategy and an edge based seeking strategy are given. Extensions to time-varying communication networks are discussed in Section \ref{time}. Numerical simulations are provided in Section  \ref{num_ex} and conclusions are drawn in Section \ref{conc}.

\section{Motivation and Problem Statement}\label{NP}
Consider a game with $N$ players who are equipped with a communication graph. Denote the player set as $\mathcal{V}=\{1,2,\cdots,N\}.$ Correspondingly, the players are labeled from $1$ to $N$, respectively. In the considered game, each player tries to
\begin{equation}
\text{min}_{x_i}\ \ f_i(\mathbf{x}),
\end{equation}
where $f_i(\mathbf{x})$ and $x_i\in\mathbb{R}$ are the objective function and action of player $i,$ respectively and $\mathbf{x}=[x_1,x_2,\cdots,x_N]^T$. Suppose that each player's own action and objective function are available for itself but not for the other players' actions. Then, the players can seek the Nash equilibrium according to \cite{YETAC2017}
\begin{equation}\label{eq_ref}
\begin{aligned}
\dot{x}_{i}&=-\nabla_if_i(\mathbf{y}_i),\\
\dot{y}_{ij}&=-\theta_{ij}\left(\sum_{k=1}^Na_{ik}(y_{ij}-y_{kj})+a_{ij}(y_{ij}-x_j)\right),
\end{aligned}
\end{equation}
where $\theta_{ij}=\theta \bar{\theta}_{ij}$, $\theta$ is the singular perturbation parameter that would affect the convergence of the method in \eqref{eq_ref}, $\bar{\theta}_{ij}$ is a fixed positive constant, $\nabla_if_i(\mathbf{y}_i)=\frac{\partial f_i(\mathbf{x})}{\partial x_i}|_{\mathbf{x}=\mathbf{y}_i}$ and $\mathbf{y}_i=[y_{i1},y_{i2},\cdots,y_{iN}]^T$. In addition, $a_{ij}$ is $(i,j)$th entry of the adjacency matrix of communication graph, which will be defined later.

In \cite{YETAC2017}, we have shown that there exists a positive constant $\theta^*$ such that for each $\theta\in(\theta^*,\infty),$ the Nash equilibrium is globally exponentially stable by utilizing \eqref{eq_ref} under certain conditions. However, the quantification of $\theta^*$ depends on the Lipschitz constants of the players' objective functions, the topology of the communication network and the number of players in the game. Hence, it is challenging for each player to distributively calculate $\theta^*$. Moreover, if there exist players that join or leave the game, $\theta^*$ might need to be re-tuned to ensure the stability of the resulting new Nash equilibrium.  Motivated by the incentive to relax these restrictions, this paper intends to design adaptive control laws to achieve \textbf{fully distributed} seeking of the Nash equilibrium, whose definition is given as follows.
\begin{Definition}
An action profile $\mathbf{x}^*=(x_i^*,\mathbf{x}_{-i}^*)$ is a Nash equilibrium if
for all $x_i\in\mathbb{R},i\in\mathcal{V},$
\begin{equation}
f_i(x_i^*,\mathbf{x}_{-i}^*)\leq f_i(x_i,\mathbf{x}_{-i}^*),
\end{equation}
where $\mathbf{x}_{-i}=[x_1,x_2,\cdots,x_{i-1},x_{i+1},\cdots,x_N]^T.$
\end{Definition}

The main results of the paper are established based on the following conditions.
\begin{Assumption}\label{ass1}
The players' objective functions $f_i(\mathbf{x})$ for $i\in\mathcal{V}$ are twice-continuously differentiable.
\end{Assumption}

\begin{Assumption}\label{ass2}
For each $i\in\mathcal{V},$ $\nabla_i f_i(\mathbf{x})$ is globally Lipschitz, i.e., there exists a positive constant $l_i$ such that
\begin{equation}
\left|\left|\nabla_i f_i(\mathbf{x})-\nabla_i f_i(\mathbf{z})\right|\right|\leq l_i||\mathbf{x}-\mathbf{z}||,
\end{equation}
for all $\mathbf{x},\mathbf{z}\in\mathbb{R}^N.$
\end{Assumption}
\begin{Assumption}\label{ass3}
There exists a positive constant $m$ such that
\begin{equation}
(\mathbf{x}-\mathbf{z})^T(\mathcal{P}(\mathbf{x})-\mathcal{P}(\mathbf{z}))\geq m||\mathbf{x}-\mathbf{z}||^2,
\end{equation}
for all $\mathbf{x},\mathbf{z}\in\mathbb{R}^N,$ where $\mathcal{P}(\mathbf{x})=[\nabla_1 f_1(\mathbf{x}),\nabla_2 f_2(\mathbf{x}),\cdots,\nabla_N f_N(\mathbf{x})]^T.$
\end{Assumption}
\begin{Remark}
Note that Assumptions \ref{ass1}-\ref{ass3} are standard assumptions in the existing literature (see, e.g., \cite{YETAC2017}\cite{YECYBER2018}). Assumption \ref{ass1} ensures the existence of continuously differentiable gradient vectors, which further indicates the existence of solutions under the proposed methods. Assumption \ref{ass2} can be removed with the presented results sacrificed to be semi-global versions. In addition, Assumption \ref{ass3} ensures the existence of the unique Nash equilibrium on which $\mathcal{P}(\mathbf{x}^*)=\mathbf{0}_N,$ where $\mathbf{0}_N$ denotes an $N$-dimensional zero column vector. Note that $\mathcal{P}(\mathbf{x})=\mathbf{0}_N$ if and only if $\mathbf{x}=\mathbf{x}^*$ by Assumption \ref{ass3} \cite{YECDC19}.
\end{Remark}
\section{Main Results}\label{main}
To achieve fully distributed Nash equilibrium seeking, we suppose that the players are equipped with a communication graph represented by $\mathcal{G}=\{\mathcal{V},\mathcal{E}\},$ where $\mathcal{V}$ and $\mathcal{E}\subset \mathcal{V}\times\mathcal{V}$ are the vertex set and  edge set, respectively. The adjacency matrix associated with $\mathcal{G}$ is defined as $\mathcal{A}=[a_{ij}]$ where $a_{ij}=1$ if $(j,i)\in\mathcal{E}$ and $a_{ij}=0$ if $(j,i)\notin\mathcal{E}.$ The network is said to be undirected if $a_{ij}=a_{ji}$ for all $i,j\in\mathcal{V}$. Correspondingly, the Laplacian matrix of $\mathcal{G}$ is $\mathcal{L}=\mathcal{D}-\mathcal{A}$, where $\mathcal{D}$ is a diagonal matrix whose elements are $\sum_{j=1}^Na_{ij}$ for $i\in\mathcal{V},$ successively. Moreover, the network is connected if there is a path between any two vertices. In addition, we consider that the communication network does not contain any self-loop, i.e., $a_{ii}=0,$ for all $i\in\mathcal{V}.$ With the defined communication graph, the following assumption will be utilized to establish the results in this section.

\begin{Assumption}\label{ass4}
The undirected communication graph $\mathcal{G}$ is connected.
\end{Assumption}

Based on the connectivity of the communication network, two fully distributed Nash equilibrium seeking algorithms will be proposed successively in this section. More specifically, a node-based adaptive design and an edge-based adaptive design will be presented, successively.

\subsection{Fully distributed strategy through node-based design}

In this section, we propose a node-based adaptive strategy to achieve the fully distributed Nash equilibrium seeking.

Motivated by  \cite{YETAC2017} and \cite{LiTAC15}, each player $i$, where $i\in\mathcal{V}$, can update its action according to
\begin{equation}\label{met}
\begin{aligned}
\dot{x}_{i}=&-\nabla_i f_i(\mathbf{y}_i),\\
\dot{y}_{ij}=&-\theta_{ij}\left(\sum_{k=1}^Na_{ik}(y_{ij}-y_{kj})+a_{ij}(y_{ij}-x_j)\right),\\
\dot{\theta}_{ij}=&\gamma_{ij} \left(\sum_{k=1}^Na_{ik}(y_{ij}-y_{kj})+a_{ij}(y_{ij}-x_j)\right)^T\\
&\times \left(\sum_{k=1}^Na_{ik}(y_{ij}-y_{kj})+a_{ij}(y_{ij}-x_j)\right),
\end{aligned}
\end{equation}
where $\gamma_{ij}$ is a positive constant.
\begin{Remark}
Similar to the method in \cite{YETAC2017}, $y_{ij}$ in \eqref{met} stands for player $i$'s local estimation on $x_j.$ Hence, $\mathbf{y}_i$ is generated by player $i$ to estimate $\mathbf{x}$, which is needed in the optimization of the players' objective functions. However, different from \cite{YETAC2017}, the control gain $\theta_{ij}$ is designed to be adaptive. As the time derivative of $\theta_{ij}$ is non-negative, it is clear that $\theta_{ij}$ would be non-decreasing.
\end{Remark}

To establish the convergence results, define the consensus error as $e_{ij}=y_{ij}-x_j,$ then,
\begin{equation}\label{method}
\begin{aligned}
\dot{x}_{i}=&-\nabla_i f_i(\mathbf{e}_{i}+\mathbf{x}),\\
\dot{e}_{ij}=&-\theta_{ij}\left(\sum_{k=1}^Na_{ik}(e_{ij}-e_{kj})+a_{ij}e_{ij}\right)-\dot{x}_j,\\
\dot{\theta}_{ij}=&\gamma_{ij} \left(\sum_{k=1}^Na_{ik}(e_{ij}-e_{kj})+a_{ij}e_{ij}\right)^T\\
&\times \left(\sum_{k=1}^Na_{ik}(e_{ij}-e_{kj})+a_{ij}e_{ij}\right),
\end{aligned}
\end{equation}
where $\mathbf{e}_i=[e_{i1},e_{i2},\cdots,e_{iN}]^T.$

Let
\begin{equation}
\begin{aligned}
\mathcal{A}_0=&\text{diag}\{a_{ij}\},\\
\mathbf{\theta}=&\text{diag}\{\theta_{ij}\},\\
M=&\mathcal{L}\otimes I_{N\times N}+\mathcal{A}_0,\\
\mathbf{e}=&[\mathbf{e}_1^T,\mathbf{e}_2^T,\cdots,\mathbf{e}_N^T]^T,\\
\mathbf{y}=&[\mathbf{y}_1^T,\mathbf{y}_2^T,\cdots,\mathbf{y}_N^T]^T,\\
\end{aligned}
\end{equation}
where $\text{diag}\{p_{ij}\}$ for $i,j\in\mathcal{V}$ defines a diagonal matrix whose elements are $p_{11},p_{12},\cdots,p_{1N},p_{2N},\cdots,p_{NN},$ successively. Moreover, $\otimes$ is the Kronecker product and $I_{N\times N}$ is an $N\times N$-dimensional identity matrix.
Then, the following theorem can be obtained.

\begin{Theorem}\label{the1}
Suppose that Assumptions \ref{ass1}-\ref{ass4} hold. Then,
\begin{enumerate}
  \item The Nash equilibrium $\mathbf{x}^*$ is globally asymptotically stable;
  \item The estimation error $||\mathbf{y}-\mathbf{1}_N\otimes \mathbf{x}||\rightarrow 0,$ as $t\rightarrow\infty$;
  \item For each $i,j\in\mathcal{V}$, $\theta_{ij}$ converges to some finite value.
\end{enumerate}
\end{Theorem}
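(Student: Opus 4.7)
The plan is to construct a composite Lyapunov function that couples the action error, the estimator consensus error, and the adaptive gains, and then invoke LaSalle's invariance principle. The candidate is
$$V = \frac{1}{2}\|\mathbf{x}-\mathbf{x}^*\|^2 + \frac{1}{2}\mathbf{e}^T M \mathbf{e} + \sum_{i,j\in\mathcal{V}} \frac{(\theta_{ij}-\beta)^2}{2\gamma_{ij}},$$
where $\beta>0$ is a constant that is fixed only in the analysis and does not enter the algorithm. A preliminary check under Assumption \ref{ass4} shows that $M$ is positive definite: if $\mathbf{e}^T M\mathbf{e}=0$, then for each $j$ the vector $(e_{1j},\ldots,e_{Nj})$ is proportional to $\mathbf{1}_N$ (by the connectedness spectrum of $\mathcal{L}$), while $e_{ij}=0$ whenever $a_{ij}=1$; since every vertex of a connected graph has at least one neighbor, this forces $\mathbf{e}=\mathbf{0}$.

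Next, I would rewrite the error dynamics compactly as $\dot{\mathbf{e}} = -\boldsymbol{\theta} M\mathbf{e} - \mathbf{1}_N\otimes\dot{\mathbf{x}}$ and differentiate $V$ term by term. For $V_1=\tfrac{1}{2}\|\mathbf{x}-\mathbf{x}^*\|^2$, splitting $\mathcal{P}(\mathbf{y})=\mathcal{P}(\mathbf{x})+(\mathcal{P}(\mathbf{y})-\mathcal{P}(\mathbf{x}))$ and combining Assumptions \ref{ass2}-\ref{ass3} via Young's inequality gives $\dot V_1 \le -\tfrac{m}{2}\|\mathbf{x}-\mathbf{x}^*\|^2 + \tfrac{\bar l^2}{2m}\|\mathbf{e}\|^2$ with $\bar l^2 = \sum_i l_i^2$. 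The decisive algebraic cancellation occurs when summing $\dot V_2$ and $\dot V_3$: because $M$ is symmetric and $\boldsymbol{\theta}$ is diagonal,
$$\mathbf{e}^T M\boldsymbol{\theta} M\mathbf{e} = \sum_{i,j}\theta_{ij}\bigl[(M\mathbf{e})_{ij}\bigr]^2,$$
while $\dot\theta_{ij}=\gamma_{ij}[(M\mathbf{e})_{ij}]^2$; the $\theta_{ij}$-dependent terms therefore telescope, leaving $\dot V_2+\dot V_3 = -\beta\|M\mathbf{e}\|^2 - (M\mathbf{e})^T(\mathbf{1}_N\otimes\dot{\mathbf{x}})$. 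Bounding the cross term by Young's inequality and using $\|\dot{\mathbf{x}}\|^2=\|\mathcal{P}(\mathbf{y})\|^2 \le 2\bar l^2(\|\mathbf{e}\|^2+\|\mathbf{x}-\mathbf{x}^*\|^2)$ together with $\|M\mathbf{e}\|^2 \ge \lambda_{\min}^2(M)\|\mathbf{e}\|^2$, then choosing $\beta$ sufficiently large (a purely analytical choice), yields $\dot V \le -c_1\|\mathbf{x}-\mathbf{x}^*\|^2 - c_2\|\mathbf{e}\|^2$ with $c_1,c_2>0$.

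From here the three conclusions follow routinely. Since $V$ is radially unbounded in $(\mathbf{x},\mathbf{e},\boldsymbol{\theta})$ and non-increasing, the entire state is bounded; each $\theta_{ij}$ is therefore bounded, and since $\dot\theta_{ij}\ge 0$, the monotone convergence theorem delivers item (3). The closed-loop system is autonomous on the resulting compact forward-invariant sublevel set, so LaSalle's invariance principle applies: the $\omega$-limit set lies in $\{\dot V = 0\} = \{\mathbf{x}=\mathbf{x}^*,\mathbf{e}=\mathbf{0}\}$, which is itself invariant because $\mathcal{P}(\mathbf{x}^*)=\mathbf{0}_N$ and $\mathbf{e}=\mathbf{0}$ is consistent with $\dot{\mathbf{e}}=\mathbf{0}$. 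This establishes items (1) and (2), recalling that $\mathbf{e} = \mathbf{y}-\mathbf{1}_N\otimes\mathbf{x}$.

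The main obstacle is the identification of the correct Lyapunov function: the choice $V_2=\tfrac{1}{2}\mathbf{e}^T M\mathbf{e}$ (rather than $\tfrac{1}{2}\|\mathbf{e}\|^2$) is nontrivial, and it is precisely what turns $\mathbf{e}^T M\boldsymbol{\theta} M\mathbf{e}$ into a weighted sum of squared components of $M\mathbf{e}$, matching the form of $\dot\theta_{ij}$ and enabling the cancellation with the adaptation term $\sum(\theta_{ij}-\beta)\dot\theta_{ij}/\gamma_{ij}$. A secondary subtlety is the positive definiteness of $M$, which is needed to promote $-\beta\|M\mathbf{e}\|^2$ into coercivity in $\|\mathbf{e}\|^2$ and which depends crucially on Assumption \ref{ass4}.
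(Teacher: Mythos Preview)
Your proposal is correct and follows essentially the same approach as the paper: the Lyapunov function $V=\tfrac{1}{2}\|\mathbf{x}-\mathbf{x}^*\|^2+\tfrac{1}{2}\mathbf{e}^T M\mathbf{e}+\sum_{i,j}(\theta_{ij}-\beta)^2/(2\gamma_{ij})$ matches the paper's choice up to harmless scaling, and the key cancellation between $\mathbf{e}^T M\boldsymbol{\theta} M\mathbf{e}$ and the adaptation term is exactly the mechanism the paper exploits. The only cosmetic differences are that you use Young's inequality where the paper packages the cross terms into a $2\times 2$ matrix, you take a scalar $\beta$ where the paper allows a diagonal $\boldsymbol{\theta}^*$ (and then conditions on its smallest entry anyway), and you make the positive definiteness of $M$ explicit, which the paper leaves implicit.
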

\begin{proof}
Define the Lyapunov candidate function as
\begin{equation}
V=V_1+V_2+V_3,
\end{equation}
where
\begin{equation}
V_1=\mathbf{e}^TM\mathbf{e},
\end{equation}
\begin{equation}
V_2=\frac{1}{2}(\mathbf{x}-\mathbf{x}^*)^T(\mathbf{x}-\mathbf{x}^*),
\end{equation}
and
\begin{equation}
V_3=\sum_{i=1}^N\sum_{j=1}^N \frac{(\theta_{ij}-\theta_{ij}^*)^2}{\gamma_{ij}}.
\end{equation}

Then,
\begin{equation}
\begin{aligned}
\dot{V}_1=&\dot{\mathbf{e}}^TM\mathbf{e}+\mathbf{e}^TM\dot{\mathbf{e}}\\
=&-2\mathbf{e}^TM \mathbf{\theta}M \mathbf{e}-2\mathbf{e}^TM(\mathbf{1}_N\otimes \dot{\mathbf{x}}),
\end{aligned}
\end{equation}
and
\begin{equation}
\begin{aligned}
\dot{V}_2=&-(\mathbf{x}-\mathbf{x}^*)^T\left[\nabla_i f_i(\mathbf{y}_i)\right]_{vec}\\
=&-(\mathbf{x}-\mathbf{x}^*)^T\left[\nabla_i f_i(\mathbf{x})\right]_{vec}\\
&-(\mathbf{x}-\mathbf{x}^*)^T\left(-\left[\nabla_i f_i(\mathbf{x})\right]_{vec}+\left[\nabla_i f_i(\mathbf{y}_i)\right]_{vec}\right)\\
\leq &-m||\mathbf{x}-\mathbf{x}^*||^2+\max_{i\in \mathcal{V}}\{l_i\}||\mathbf{x}-\mathbf{x}^*||||\mathbf{e}||,
\end{aligned}
\end{equation}
where the notation $[p_i]_{vec}$ for $i\in\mathcal{V}$ defines a column vector with its $i$th entry being $p_i$ and $\max_{i\in \mathcal{V}}\{l_i\}$ is the largest value of $l_i$ for $i\in\mathcal{V}.$ In addition, the last inequality is obtained based on Assumptions \ref{ass2}-\ref{ass3}.

Moreover,
\begin{equation}
\begin{aligned}
\dot{V}_3=&2\sum_{i=1}^N\sum_{j=1}^N(\theta_{ij}-\theta_{ij}^*) (\sum_{k=1}^Na_{ik}(e_{ij}-e_{kj})+a_{ij}e_{ij})^T\\
 &\times (\sum_{k=1}^Na_{ik}(e_{ij}-e_{kj})+a_{ij}e_{ij})\\
= & 2\mathbf{e}^TM(\mathbf{\theta}-\mathbf{\theta}^*)M\mathbf{e},
\end{aligned}
\end{equation}
where $\mathbf{\theta}^*=\text{diag}\{\theta_{ij}^*\}.$

Therefore,
\begin{equation}
\begin{aligned}
\dot{V}=&\dot{V}_1+\dot{V}_2+\dot{V}_3\\
\leq &-2\mathbf{e}^TM\mathbf{\theta}^*M\mathbf{e}-m||\mathbf{x}-\mathbf{x}^*||^2\\
&+\max_{i\in\mathcal{V}}\{l_i\}||\mathbf{x}-\mathbf{x}||||\mathbf{e}||-2\mathbf{e}^TM \mathbf{1}_N\otimes \dot{\mathbf{x}}\\
\leq &-2\mathbf{e}^TM\mathbf{\theta}^*M\mathbf{e}-m||\mathbf{x}-\mathbf{x}^*||^2\\
&+\max_{i\in\mathcal{V}}\{l_i\}||\mathbf{x}-\mathbf{x}^*||||\mathbf{e}||\\
&+\bar{l}_1||\mathbf{e}||^2+\bar{l}_2||\mathbf{e}||||\mathbf{x}-\mathbf{x}^*||,
\end{aligned}
\end{equation}
where $\bar{l}_1=2||M||\sqrt{N}\max_{i\in\mathcal{V}}\{l_i\}$ and $\bar{l}_2=2||M||N\max_{i\in\mathcal{V}}\{l_i\}$. Note that the last inequality is obtained by noticing that
\begin{equation}
\begin{aligned}
&||2\mathbf{e}^TM \mathbf{1}_N\otimes \dot{\mathbf{x}}||\\
=&
||2\mathbf{e}^TM \mathbf{1}_N\otimes [\nabla_i f_i(\mathbf{y}_i)]_{vec}||\\
=&||2\mathbf{e}^TM \mathbf{1}_N\otimes [\nabla_i f_i(\mathbf{y}_i)-\nabla_i f_i(\mathbf{x})]_{vec}||\\
&+||2\mathbf{e}^TM \mathbf{1}_N\otimes[\nabla_i f_i(\mathbf{x})-\nabla_i f_i(\mathbf{x}^*)]_{vec}||\\
\leq &\bar{l}_1||\mathbf{e}||^2+\bar{l}_2||\mathbf{e}||||\mathbf{x}-\mathbf{x}^*||.
\end{aligned}
\end{equation}
Define
\begin{equation}
A=\left[
    \begin{array}{cc}
      m & -\frac{\bar{l}_2+\max_{i\in\mathcal{V}}\{l_i\}}{2} \\
      -\frac{\bar{l}_2+\max_{i\in\mathcal{V}}\{l_i\}}{2} & 2\lambda_{min}(\mathbf{\theta}^*)\lambda_{min}(MM)-\bar{l}_1 \\
    \end{array}
  \right],
\end{equation}
and let
\begin{equation}
\lambda_{min}(\mathbf{\theta}^*)>\frac{(\bar{l}_2+\max_{i\in\mathcal{V}}\{l_i\})^2+4m\bar{l}_1}{8m\lambda_{min}(MM)}.
\end{equation}
Then,
\begin{equation}
\dot{V}\leq -\lambda_{min}(A)||\mathbf{E}||^2,
\end{equation}
where $\lambda_{min}(A)>0$ and $\mathbf{E}=[(\mathbf{x}-\mathbf{x}^*)^T,\mathbf{e}^T]^T.$

Noticing that $\dot{V}\leq 0$ for all $t\geq 0,$ we get that $\theta_{ij}$ for all $i,j\in\mathcal{V}$ are bounded. In addition, by utilizing LaSalle's invariance principle \cite{Khailil02}, $||\mathbf{E}||\rightarrow 0,$ as $t\rightarrow \infty$. From \eqref{met}, we see that $\theta_{ij}$ is non-decreasing as it's time derivative is nonnegative for all $t\geq 0.$ Moreover, as $t\rightarrow \infty,$ $||\mathbf{e}||\rightarrow 0$ and $||\mathbf{x}-\mathbf{x}^*||\rightarrow 0,$ we see that $\dot{\theta}_{ij}\rightarrow 0$ as $t\rightarrow \infty.$   Hence, $\theta_{ij}$ for $i,j\in\mathcal{V}$ will converge to some finite values as $t\rightarrow \infty.$
\end{proof}

\subsection{Fully distributed strategy  through edge-based design}
In this section, an edge-based adaptive control strategy will be proposed by dynamically adjusting the weights on the edges of the communication graphs.

Motivated by \cite{YETAC2017} and \cite{LiTAC13}, design the Nash equilibrium seeking strategy as
\begin{equation}\label{meth2}
\begin{aligned}
\dot{x}_i=&-\nabla_if_i(\mathbf{y}_i),\\
\dot{y}_{ij}=&-\left(\sum_{k=1}^Na_{ik}c_{ik}(y_{ij}-y_{kj})+a_{ij}\bar{c}_{ij}(y_{ij}-x_j)\right),\\
\dot{c}_{ij}=&a_{ij}(\mathbf{y}_i-\mathbf{y}_j)^T(\mathbf{y}_i-\mathbf{y}_j),\\
\dot{\bar{c}}_{ij}=&c_{ij}(y_{ij}-x_j)^T(y_{ij}-x_j),
\end{aligned}
\end{equation}
where $i,j\in\mathcal{V}$ and $c_{ij}(0)=c_{ji}(0)$ for all $i,j\in\mathcal{V}.$

\begin{Remark}
Different from \eqref{met}, where the control gain on each player's accessible consensus error is designed to be adaptive, the weights on the edges of the communication graph are dynamically adjusted in \eqref{meth2}. From \eqref{meth2}, it can be seen that both $\dot{c}_{ij}$ and $\dot{\bar{c}}_{ij}$ are non-negative, indicating that $c_{ij}$ and $\bar{c}_{ij}$ are both non-decreasing.
\end{Remark}

Define $e_{ij}=y_{ij}-x_j$. Then, the method in \eqref{meth2} can be rewritten as
\begin{equation}
\begin{aligned}
\dot{x}_i=&-\nabla_if_i(\mathbf{e}_i+\mathbf{x}),\\
\dot{e}_{ij}=&-\left(\sum_{k=1}^Na_{ik}c_{ik}(e_{ij}-e_{kj})+a_{ij}\bar{c}_{ij}e_{ij}\right)-\dot{x}_j,\\
\dot{c}_{ij}=& a_{ij} (\mathbf{e}_i-\mathbf{e}_j)^T(\mathbf{e}_i-\mathbf{e}_j),\\
\dot{\bar{c}}_{ij}=& a_{ij}e_{ij}^Te_{ij}.
\end{aligned}
\end{equation}

\begin{Theorem}\label{the2}
Suppose that Assumptions \ref{ass1}-\ref{ass4} are satisfied. Then,
\begin{enumerate}
  \item The Nash equilibrium $\mathbf{x}^*$ is globally asymptotically stable;
  \item The estimation error $||\mathbf{y}-\mathbf{1}_N\otimes \mathbf{x}||\rightarrow 0$ as $t\rightarrow \infty$;
  \item The adaptive control gains $c_{ij}$ and $\bar{c}_{ij}$ for all $i,j\in\mathcal{V}$ converge to some finite values.
\end{enumerate}
\end{Theorem}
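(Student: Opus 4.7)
The plan is to mirror the Lyapunov-plus-LaSalle template of Theorem~\ref{the1}, but with two quadratic penalty terms, one for each family of edge weights $c_{ij}$ and $\bar c_{ij}$. First I would verify the symmetry $c_{ij}(t)=c_{ji}(t)$ for every $t\geq 0$: since $a_{ij}=a_{ji}$ by Assumption~\ref{ass4}, the adaptation $\dot c_{ij}=a_{ij}(\mathbf{y}_i-\mathbf{y}_j)^T(\mathbf{y}_i-\mathbf{y}_j)$ is invariant under the swap $i\leftrightarrow j$, and the initial equalities $c_{ij}(0)=c_{ji}(0)$ are propagated to every later time. Consequently the weighted Laplacian $L_c$, whose off-diagonal entries are $-a_{ij}c_{ij}$, is symmetric positive semidefinite, so the matrix $L_c\otimes I_{N\times N}+\bar C_0$, with $\bar C_0=\text{diag}\{a_{ij}\bar c_{ij}\}$, that drives $\dot{\mathbf{e}}=-(L_c\otimes I_{N\times N}+\bar C_0)\mathbf{e}-\mathbf{1}_N\otimes\dot{\mathbf{x}}$ is symmetric.

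Next I would set $V=V_1+V_2+V_3+V_4$ with $V_1=\frac{1}{2}\mathbf{e}^T\mathbf{e}$, $V_2=\frac{1}{2}(\mathbf{x}-\mathbf{x}^*)^T(\mathbf{x}-\mathbf{x}^*)$, and $V_3$, $V_4$ of the form $\sum_{i,j}\alpha a_{ij}(c_{ij}-c^*)^2$ and $\sum_{i,j}\beta a_{ij}(\bar c_{ij}-\bar c^*)^2$ with positive constants $c^*$, $\bar c^*$ and scalar weights $\alpha,\beta>0$ tuned to produce the cancellations. Using the Laplacian identity $\mathbf{e}^T(L_c\otimes I_{N\times N})\mathbf{e}=\frac{1}{2}\sum_{i,j}a_{ij}c_{ij}||\mathbf{e}_i-\mathbf{e}_j||^2$ and the diagonal identity $\mathbf{e}^T\bar C_0\mathbf{e}=\sum_{i,j}a_{ij}\bar c_{ij}e_{ij}^2$, together with the explicit forms $\dot c_{ij}=a_{ij}||\mathbf{e}_i-\mathbf{e}_j||^2$ and $\dot{\bar c}_{ij}=a_{ij}e_{ij}^2$, one can choose $\alpha,\beta$ so that the $c_{ij}$- and $\bar c_{ij}$-dependent parts of $\dot V_1$ cancel exactly with $\dot V_3+\dot V_4$, leaving only a constant-weighted residual of the form $-\frac{c^*}{2}\sum_{i,j}a_{ij}||\mathbf{e}_i-\mathbf{e}_j||^2-\bar c^*\sum_{i,j}a_{ij}e_{ij}^2$. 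Under Assumption~\ref{ass4} this combines into a bound $-\kappa(c^*,\bar c^*)||\mathbf{e}||^2$ with $\kappa$ growing linearly in $c^*$ and $\bar c^*$.

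The remaining drift $-\mathbf{e}^T(\mathbf{1}_N\otimes\dot{\mathbf{x}})$ is handled exactly as in the proof of Theorem~\ref{the1}: substituting $\dot{\mathbf{x}}=-[\nabla_i f_i(\mathbf{y}_i)]_{vec}$ and inserting $\pm\nabla_i f_i(\mathbf{x})$ and $\pm\nabla_i f_i(\mathbf{x}^*)$, Assumption~\ref{ass2} together with $\mathcal{P}(\mathbf{x}^*)=\mathbf{0}_N$ yields a bound $\tilde l_1||\mathbf{e}||^2+\tilde l_2||\mathbf{e}||\,||\mathbf{x}-\mathbf{x}^*||$ for suitable constants $\tilde l_1,\tilde l_2$. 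Combined with $\dot V_2\leq -m||\mathbf{x}-\mathbf{x}^*||^2+\max_{i\in\mathcal{V}}\{l_i\}||\mathbf{x}-\mathbf{x}^*||\,||\mathbf{e}||$ from Assumption~\ref{ass3}, one obtains $\dot V\leq -\mathbf{E}^T A\mathbf{E}$ with $\mathbf{E}=[(\mathbf{x}-\mathbf{x}^*)^T,\mathbf{e}^T]^T$ and a $2\times 2$ matrix $A$ whose $(2,2)$-entry grows with $c^*$ and $\bar c^*$. Choosing $c^*$ and $\bar c^*$ large enough renders $A$ positive definite, so that $\dot V\leq -\lambda_{min}(A)||\mathbf{E}||^2$. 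Boundedness of $V$ then implies boundedness of every $c_{ij}$ and $\bar c_{ij}$, and together with their monotone non-decreasing behavior this yields item~(3). LaSalle's invariance principle applied to the largest invariant set inside $\{\mathbf{E}=\mathbf{0}\}$ delivers items~(1) and~(2).

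The main obstacle is the bookkeeping in the cancellation step. Unlike in Theorem~\ref{the1}, where the gain $\theta_{ij}$ multiplies the fixed matrix $M$ cleanly on both sides of the quadratic form, here the adaptive gains are scattered across the Laplacian $L_c$ and the diagonal $\bar C_0$ that appear directly in $\dot{\mathbf{e}}$. The cancellation must therefore be carried out edge-by-edge through the Laplacian sum-of-squares identity, and the precise scalings of $V_3$ and $V_4$ (the constants $\alpha$ and $\beta$ above) must be extracted by matching the symmetric double counting in the sums over $(i,j)$. The symmetry $c_{ij}=c_{ji}$ established in the first step is essential for making this match work.
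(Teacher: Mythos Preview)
Your proposal is correct and follows essentially the same approach as the paper's proof: the same Lyapunov candidate $V=\tfrac12\|\mathbf{e}\|^2+\tfrac12\|\mathbf{x}-\mathbf{x}^*\|^2$ plus quadratic penalties on $c_{ij}-c^*$ and $\bar c_{ij}-c^*$, the same edge-by-edge cancellation via the symmetry $c_{ij}(t)=c_{ji}(t)$ and the Laplacian sum-of-squares identity to leave a residual $-c^*\mathbf{e}^TM\mathbf{e}$, the same Lipschitz/monotonicity bounds on the cross term $-\mathbf{e}^T(\mathbf{1}_N\otimes\dot{\mathbf{x}})$, and the same $2\times2$ positive-definiteness check followed by LaSalle. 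The only cosmetic differences are that the paper uses a single target $c^*$ for both families (with fixed weights $\tfrac14$ and $\tfrac12$ rather than abstract $\alpha,\beta$) and omits the factor $a_{ij}$ from the penalty sums, which is immaterial since $\dot c_{ij}=\dot{\bar c}_{ij}=0$ whenever $a_{ij}=0$.
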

\begin{proof}
Define the Lyapunov candidate function as
\begin{equation}
V=V_1+V_2+V_3+V_4,
\end{equation}
where
\begin{equation}
\begin{aligned}
V_1&=\frac{1}{2}(\mathbf{x}-\mathbf{x}^*)^T(\mathbf{x}-\mathbf{x}^*),\\
V_2&=\frac{1}{2}\sum_{i=1}^N \mathbf{e}_i^T\mathbf{e}_i,\\
V_3&=\sum_{i=1}^N \sum_{j=1,j\neq i}^N \frac{(c_{ij}-c^*)^2}{4},
\end{aligned}
\end{equation}
and
\begin{equation}
V_4=\sum_{i=1}^N\sum_{j=1,j\neq i}^N \frac{(\bar{c}_{ij}-c^*)^2}{2},
\end{equation}
and $c^*$ is a positive constant to be determined later.
 Then,
 \begin{equation}
 \begin{aligned}
 \dot{V}_1=&-(\mathbf{x}-\mathbf{x}^*)^T [\nabla_if_i(\mathbf{e}_i+\mathbf{x})]_{vec}\\
 \leq & -m||\mathbf{x}-\mathbf{x}^*||^2+\max_{i\in\mathcal{V}}\{l_i\}||\mathbf{x}-\mathbf{x}^*||||\mathbf{e}||,
 \end{aligned}
 \end{equation}
 and
 \begin{equation}
 \dot{V}_2=-\sum_{i=1}^N \mathbf{e}_i^T \left(\sum_{k=1}^N a_{ik}c_{ik}(\mathbf{e}_i-\mathbf{e}_k)+ \bar{d}_i\mathbf{e}_i+\dot{\mathbf{x}}\right),
 \end{equation}
 where $\bar{d}_i$ is a diagonal matrix whose diagonal elements are $a_{i1}\bar{c}_{i1}$, $a_{i2}\bar{c}_{i2},\cdots$ $a_{iN}\bar{c}_{iN},$ successively.

Moreover,
\begin{equation}
\begin{aligned}
\dot{V}_3=&\frac{1}{2}\sum_{i=1}^N \sum_{j=1,j\neq i}^N (c_{ij}-c^*)\dot{c}_{ij}\\
=& \frac{1}{2}\sum_{i=1}^N \sum_{j=1,j\neq i}^N (c_{ij}-c^*) a_{ij}(\mathbf{e}_i-\mathbf{e}_j)^T(\mathbf{e}_i-\mathbf{e}_j)\\
=&\sum_{i=1}^N\sum_{j=1}^N (c_{ij}-c^*)a_{ij}\mathbf{e}_i^T(\mathbf{e}_i-\mathbf{e}_j),
\end{aligned}
\end{equation}
where the last inequality is obtained by utilizing the fact that $c_{ij}(t)=c_{ji}(t)$ for all $t\geq 0.$
In addition,
\begin{equation}
\dot{V}_4=\sum_{i=1}^N\sum_{j=1,j\neq i}^N(\bar{c}_{ij}-c^*)a_{ij}e_{ij}^Te_{ij}.
\end{equation}
Hence,
\begin{equation}
\begin{aligned}
\dot{V}\leq &-m||\mathbf{x}-\mathbf{x}^*||^2+\max_{i\in\mathcal{V}}\{l_i\}||\mathbf{x}-\mathbf{x}^*||||\mathbf{e}||\\
&-\sum_{i=1}^N \mathbf{e}_i^T \left(\sum_{k=1}^N a_{ik}c_{ik} (\mathbf{e}_i-\mathbf{e}_k)+ \bar{d}_i\mathbf{e}_i+\dot{\mathbf{x}}\right)\\
&+\sum_{i=1}^N\sum_{j=1}^N (c_{ij}-c^*)a_{ij}\mathbf{e}_i^T(\mathbf{e}_i-\mathbf{e}_j)\\
&+\sum_{i=1}^N\sum_{j=1,j\neq i}^N(\bar{c}_{ij}-c^*)a_{ij}e_{ij}^Te_{ij}\\
\leq & -m||\mathbf{x}-\mathbf{x}^*||^2+\max_{i\in\mathcal{V}}\{l_i\}||\mathbf{x}-\mathbf{x}^*||||\mathbf{e}||\\
&-c^* \mathbf{e}^T M\mathbf{e}+\sum_{i=1}^N \mathbf{e}_i^T \dot{\mathbf{x}},
\end{aligned}
\end{equation}
where
\begin{equation}
\begin{aligned}
&\left|\left|\sum_{i=1}^N \mathbf{e}_i^T \dot{\mathbf{x}}\right|\right|\leq  \left|\left|\sum_{i=1}^N \mathbf{e}_i^T[\nabla_if_i(\mathbf{y}_i)-\nabla_if_i(\mathbf{x})]_{vec}\right|\right|\\
&+\left|\left|\sum_{i=1}^N \mathbf{e}_i^T\left[\nabla_if_i(\mathbf{x})-\nabla_if_i(\mathbf{x}^*)\right]_{vec}\right|\right|\\
\leq& \max_{i\in\mathcal{V}}\{l_i\}||\mathbf{e}||^2+\max_{i\in\mathcal{V}}\{l_i\}\sqrt{N}||\mathbf{e}||||\mathbf{x}-\mathbf{x}^*||.
\end{aligned}
\end{equation}
Define
\begin{equation}
A=\left[
   \begin{array}{cc}
     m & -\frac{\max_{i\in\mathcal{V}}\{l_i\}(1+\sqrt{N})}{2} \\
     -\frac{\max_{i\in\mathcal{V}}\{l_i\}(1+\sqrt{N})}{2} & c^*\lambda_{min}(M)-\max_{i\in\mathcal{V}}\{l_i\} \\
   \end{array}
 \right],
\end{equation}
and choose
\begin{equation}
c^*>\frac{(\max_{i\in\mathcal{V}}\{l_i\}(1+\sqrt{N}))^2}{4m \lambda_{min}(M)}+\frac{\max_{i\in\mathcal{V}}\{l_i\}}{\lambda_{min}(M)},
\end{equation}
then, $\lambda_{min}(A)>0$ and
\begin{equation}
\dot{V}\leq -\lambda_{min}(A)||\mathbf{E}||^2,
\end{equation}
$\mathbf{E}=[(\mathbf{x}-\mathbf{x}^*)^T,(\mathbf{y}-\mathbf{1}_N\otimes \mathbf{x})^T]^T.$
Hence, $\dot{V}\leq 0,$ indicating that $\mathbf{E}$, $c_{ij}$ and $\bar{c}_{ij}$ stay bounded for all $i,j\in\mathcal{V}$. Moreover, by LaSalle's invariance principle \cite{Khailil02},  $||\mathbf{x}-\mathbf{x}^*||\rightarrow 0$ and $||\mathbf{y}-\mathbf{1}_N\otimes \mathbf{x}||\rightarrow 0$
as $t\rightarrow \infty.$ In addition, as $||\mathbf{x}-\mathbf{x}^*||\rightarrow 0$ and $||\mathbf{y}-\mathbf{1}_N\otimes \mathbf{x}||\rightarrow 0$, $\dot{c}_{ij}\rightarrow 0$ and $\dot{\bar{c}}_{ij}\rightarrow 0$ as $t\rightarrow \infty.$ Therefore, $c_{ij}$ and $\bar{c}_{ij}$ for all $i,j\in\mathcal{V}$ would converge to some finite values.
 \end{proof}
\begin{Remark}
From Theorems \ref{the1}-\ref{the2}, we see that both the proposed node-based adaptive Nash equilibrium seeking algorithm and edge-based adaptive Nash equilibrium seeking algorithm can achieve fully distributed Nash equilibrium seeking. Moreover, compared with the method in \cite{YETAC2017}, it is clear that the requirement on the quantification of the singular perturbation parameter is removed in the proposed methods.
\end{Remark}

\section{Extensions to switching communication conditions}\label{time}
In this section, we suppose that the communication graph is switching among a set of undirected and connected communication graphs. Let $p$ be the number of possible undirected graphs in the communication condition and define $\sigma(t):[0,\infty)\rightarrow 1,2,\cdots,p$ as a switching signal that maps the time sequence to the set of possible graphs. Suppose that the communication graph is fixed for $t\in[t_k,t_{k+1})$ where $k=0,1,2,\cdots$ and  there exists a positive constant $\epsilon$ such that $t_{k+1}-t_k\geq \epsilon$ for all $k=0,1,2,\cdots.$ For notational convenience, denote the communication graph at time $t$ as $\mathcal{G}^{\sigma(t)}$. Correspondingly, the adjacency matrix, Laplacian matrix and the $(i,j)$th entry of the adjacency matrix are denoted as $\mathcal{A}^{\sigma(t)},$ $\mathcal{L}^{\sigma(t)}$ and $a_{ij}^{\sigma(t)},$ respectively.
\begin{Assumption}\label{ass5}
For each $i\in\{1,2,\cdots,p\},$ the communication graph $\mathcal{G}^i$ is undirected and connected.
\end{Assumption}

Accordingly, the edge-based seeking strategy is adapted as
\begin{equation}\label{met3}
\begin{aligned}
\dot{x}_i=&-\nabla_if_i(\mathbf{y}_i),\\
\dot{y}_{ij}=&-\left(\sum_{k=1}^Na_{ik}^{\sigma(t)}c_{ik}(y_{ij}-y_{kj})+a_{ij}^{\sigma(t)}\bar{c}_{ij}(y_{ij}-x_j)\right),\\
\dot{c}_{ij}=&a_{ij}^{\sigma(t)}(\mathbf{y}_i-\mathbf{y}_j)^T(\mathbf{y}_i-\mathbf{y}_j),\\
\dot{\bar{c}}_{ij}=&a_{ij}^{\sigma(t)}(y_{ij}-x_j)^T(y_{ij}-x_j),
\end{aligned}
\end{equation}
for $i,j\in\mathcal{V}.$

Then, the following result can be obtained.
\begin{Theorem}\label{the3}
Suppose that Assumptions \ref{ass1}-\ref{ass3} and \ref{ass5} are satisfied. Then,
\begin{enumerate}
  \item The Nash equilibrium $\mathbf{x}^*$ is globally asymptotically stable.
  \item The players' estimation error $||\mathbf{y}-\mathbf{1}_N\otimes\mathbf{x}||\rightarrow 0$ as $t\rightarrow \infty.$
  \item The adaptive control gains $c_{ij}$ and $\bar{c}_{ij}$ for all $i,j\in\mathcal{V}$ would converge to some finite values.
\end{enumerate}
\end{Theorem}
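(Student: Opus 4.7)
The plan is to adapt the Lyapunov argument of Theorem \ref{the2} to the switching setting by constructing a common (mode-independent) Lyapunov function. I would keep $V=V_1+V_2+V_3+V_4$ exactly as in the proof of Theorem \ref{the2}, with some constant $c^*>0$ to be fixed later. The state $(\mathbf{x},\mathbf{y},c_{ij},\bar{c}_{ij})$ is continuous through every switching instant since \eqref{met3} contains no jumps, and the symmetry $c_{ij}(t)=c_{ji}(t)$ is preserved under switching because $\dot c_{ij}=\dot c_{ji}$ in every mode (each $\mathcal{G}^i$ being undirected). Hence the cancellation used to compute $\dot V_3$ in the fixed-graph proof carries over to every dwell interval $[t_k,t_{k+1})$.

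By Assumption \ref{ass5} every matrix $M^i=\mathcal{L}^i\otimes I_{N\times N}+\mathcal{A}_0^i$ is positive definite, and since the family is finite the quantity $\lambda^*=\min_{i\in\{1,\ldots,p\}}\lambda_{\min}(M^i)$ is strictly positive. I would then choose $c^*$ as in Theorem \ref{the2} but with $\lambda_{\min}(M)$ replaced by $\lambda^*$. Repeating mode-wise the same chain of inequalities yields, almost everywhere in $t$, $\dot V\leq -\alpha\,\|\mathbf{E}\|^2$ for some $\alpha>0$ independent of $\sigma(t)$, where $\mathbf{E}=[(\mathbf{x}-\mathbf{x}^*)^T,(\mathbf{y}-\mathbf{1}_N\otimes \mathbf{x})^T]^T$. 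Since $V$ is absolutely continuous (no upward jumps at switching times), integration gives $V(t)\leq V(0)$ for all $t\geq 0$ and $\int_0^\infty \|\mathbf{E}(\tau)\|^2\,d\tau<\infty$.

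To turn this into pointwise convergence I cannot apply LaSalle's invariance principle directly, because the closed-loop system is time-varying; instead I would invoke Barbalat's lemma. Boundedness of $V$ delivers uniform boundedness of $\mathbf{x}-\mathbf{x}^*$, $\mathbf{e}$, $c_{ij}$, and $\bar c_{ij}$; combined with Assumption \ref{ass2} and the uniform bound $a_{ij}^{\sigma(t)}\in\{0,1\}$, this shows that $\dot{\mathbf{E}}$ is essentially bounded, so $\|\mathbf{E}\|^2$ is uniformly continuous on $[0,\infty)$. Barbalat's lemma then gives $\|\mathbf{E}\|\to 0$, which proves (1) and (2). For (3), $\dot c_{ij}\geq 0$ and $\dot{\bar c}_{ij}\geq 0$ along the trajectory, so both sequences are monotone nondecreasing; boundedness (from $V\leq V(0)$) then forces convergence to finite limits.

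The main obstacle I anticipate is guaranteeing that the Lyapunov decrease and the bound on $\dot{\mathbf{E}}$ truly hold uniformly across switching instants, rather than only within each dwell interval. The delicate points are (i) verifying that the common $c^*$ chosen via $\lambda^*$ actually makes the $2\times 2$ matrix $A$ of Theorem \ref{the2} positive definite simultaneously for every mode, and (ii) ensuring $\dot{\mathbf{E}}$ stays essentially bounded through switches so that uniform continuity of $\|\mathbf{E}\|^2$, required by Barbalat, holds. The dwell-time assumption $t_{k+1}-t_k\geq\epsilon$ of Assumption \ref{ass5} plays a supporting role here by ruling out pathological accumulation of switching times.
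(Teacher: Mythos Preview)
Your proposal is correct and essentially mirrors the paper's proof: the same common Lyapunov function $V=V_1+V_2+V_3+V_4$, the same uniform lower bound $\lambda^*$ over the finitely many graphs (the paper writes this as $\underline{\lambda}$), and the same choice of $c^*$ leading to $\dot V\leq -\lambda_{\min}(A)\|\mathbf{E}\|^2$ independent of the mode. The only cosmetic difference is that the paper invokes the LaSalle--Yoshizawa theorem to conclude, whereas you unpack that step via Barbalat's lemma; since LaSalle--Yoshizawa is proved precisely by the Barbalat argument you outline, the two are equivalent here.
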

\begin{Proof}
Define the Lyapunov candidate function as
\begin{equation}
\begin{aligned}
V=&\frac{1}{2}(\mathbf{x}-\mathbf{x}^*)^T(\mathbf{x}-\mathbf{x}^*)+\frac{1}{2}\sum_{i=1}^N \mathbf{e}_i^T\mathbf{e}_i\\
&+\sum_{i=1}^N \sum_{j=1,j\neq i}^N \frac{(c_{ij}-c^*)^2}{4}+\sum_{i=1}^N\sum_{j=1,j\neq i}^N \frac{(\bar{c}_{ij}-c^*)^2}{2},
\end{aligned}
\end{equation}
where $c^*$ is a positive constant to be determined later.
Then, following the analysis in the proof of Theorem \ref{the2},
it can be obtained that
\begin{equation}
\begin{aligned}
\dot{V}\leq &-m||\mathbf{x}-\mathbf{x}^*||^2+\max_{i\in\mathcal{V}}\{l_i\}||\mathbf{x}-\mathbf{x}^*||||\mathbf{e}||\\
&-c^* \mathbf{e}^T (\mathcal{L}^{\sigma(t)}\otimes I_{N\times N}+\mathcal{A}_0^{\sigma(t)})\mathbf{e}\\
&+\max_{i\in\mathcal{V}}\{l_i\}||\mathbf{e}||^2+\sqrt{N}\max_{i\in\mathcal{V}}\{l_i\}||\mathbf{e}||||\mathbf{x}-\mathbf{x}^*||,
\end{aligned}
\end{equation}
where $\mathcal{A}_0^{\sigma(t)}=\text{diag}\{a_{ij}^{\sigma(t)}\}.$
Define $\underline{\lambda}$ as the minimum eigenvalue of $\mathcal{L}^{\sigma(t)}\otimes I_{N\times N}+\mathcal{A}_0^{\sigma(t)}$ for all possible combinations of
$\mathcal{L}^{\sigma(t)}$ and $\mathcal{A}_0^{\sigma(t)}$. Then,

\begin{equation}
\begin{aligned}
\dot{V}\leq &-m||\mathbf{x}-\mathbf{x}^*||^2+\max_{i\in\mathcal{V}}\{l_i\}||\mathbf{x}-\mathbf{x}^*||||\mathbf{e}||\\
&-c^*\underline{\lambda}||\mathbf{e}||^2+\max_{i\in\mathcal{V}}\{l_i\}||\mathbf{e}||^2\\
&+\sqrt{N}\max_{i\in\mathcal{V}}\{l_i\}||\mathbf{e}||||\mathbf{x}-\mathbf{x}^*||.
\end{aligned}
\end{equation}
Then, by choosing $c^*>\frac{(\max\{l_i\}(1+ \sqrt{N}))^2}{4m\underline{\lambda}}+\frac{\max\{l_i\}}{\underline{\lambda}},$ it can be obtained that
\begin{equation}
\dot{V}\leq -\lambda_{min}(A)||\mathbf{E}||^2,
\end{equation}
where $\lambda_{min}(A)>0$ and $A=\left[
           \begin{array}{cc}
             m & -\frac{\max\{l_i\}(1+\sqrt{N})}{2} \\
             -\frac{\max\{l_i\}(1+\sqrt{N})}{2} & c^*\underline{\lambda}-\max_{i\in\mathcal{V}}\{l_i\} \\
           \end{array}
         \right]$.
Hence, by further utilizing the LaSalle-Yoshizawa theorem \cite{Krstic}, the conclusions can be obtained.
\end{Proof}

\begin{Remark}
In \cite{YECYBER2018}, a Nash equilibrium seeking was established for networked games under time-varying communication networks. However, it was shown that the singular perturbation parameter should be neither to large nor too small to ensure the convergence of the presented method. However, it is challenging to explicitly calculate the lower-bound and the upper-bound of the singular perturbation parameter. Different from the method in \cite{YECYBER2018}, the adaptive method in this section is fully distributed. Hence, it is more suitable for practical implementation in distributed systems.
\end{Remark}

\section{A Numerical example}\label{num_ex}

In this section, we consider the connectivity control game for a network of $5$ mobile sensor networks considered in \cite{YECDC19}.  In the considered game, $x_i\in\mathbb{R}^2.$ For notational convenience, denote $x_i$ as $x_i=[x_{i1},x_{i2}]^T.$ Moreover, the players' objective functions are defined as
\begin{equation}
\begin{aligned}
f_1(\mathbf{x})=&x_1^Tx_1+x_{11}+x_{12}+1+||x_1-x_2||^2,\\
f_2(\mathbf{x})=&2x_2^Tx_2+2x_{21}+2x_{22}+2+||x_2-x_3||^2,\\
f_3(\mathbf{x})=&3x_3^Tx_3+3x_{31}+3x_{32}+3+||x_3-x_2||^2,\\
f_4(\mathbf{x})=&4x_4^Tx_4+4x_{41}+4x_{42}+4+||x_4-x_2||^2\\
&+||x_4-x_5||^2,\\
f_5(\mathbf{x})=&5x_4^Tx_4+5x_{51}+5x_{52}+5+||x_5-x_1||^2,
\end{aligned}
\end{equation}
by which the Nash equilibrium is unique and is $x_{ij}^*=0.5,$ for all $i\in \{1,2,\cdots,5\},j\in\{1,2\}$ \cite{YECDC19}.
\begin{figure}[!htp]
\centering
\scalebox{0.4}{\includegraphics{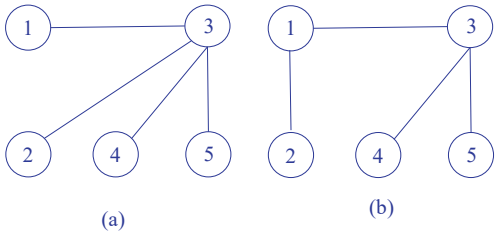}}
\vspace{-45mm}
\caption{The communication graphs.}\label{graph}
\end{figure}

In the rest, we provide simulation results for the methods in
\eqref{met}, \eqref{meth2} and \eqref{met3}, successively. Note that as $x_i\in\mathbb{R}^2,$ $y_{ij}\in\mathbb{R}^2$ as well. For notational clarity, we write it as $y_{ij}=[y_{ij1},y_{ij2}]^T$ in the remainder of the paper.
\subsection{The node-based adaptive Nash equilibrium seeking strategy under a fixed communication graph}
In this section, we consider that the players' actions are updated according to \eqref{met} under the communication graph in Fig. \ref{graph} (a).

In the simulation, $\mathbf{x},\mathbf{y},$ and $\theta_{ij},i,j\in\mathcal{V}$ are initialized randomly between $-20$ to $20$. In addition, $\gamma_{ij}=1$ for all $i,j\in\mathcal{V}.$ The simulation results generated by the proposed method in \eqref{met} are given in Figs. \ref{adaptive1_state}-\ref{adaptive1_esti}. The horizontal and vertical coordinates of Fig. \ref{adaptive1_state} are $x_{i1}$ and $x_{i2},$ respectively. From Fig. \ref{adaptive1_state}, we see that though the variables are randomly initialized, the proposed method enables the sensors' position trajectories to converge to the Nash equilibrium. In addition, Fig. \ref{adaptive1_para} illustrates $\theta_{ij}$ for $i,j\in\{1,2,\cdots,5\},$ from which it is clear that $\theta_{ij}$ for all $i,j\in\{1,2,\cdots,5\}$ converge to some finite values. The  plots of $y_{ij1}$ versus $y_{ij2}$ for $j\in\{1,2,\cdots,5\}$ are given in the $5$ sub-figures of Fig. \ref{adaptive1_esti} for players $1$-$5$, respectively. From Fig. \ref{adaptive1_esti}, we see that $\mathbf{y}_i$ for all $i\in\{1,2,\cdots,5\}$ also converge to the Nash equilibrium, thus verifying Theorem \ref{the1}.

\begin{figure}[!htp]
\centering
\scalebox{0.55}{\includegraphics{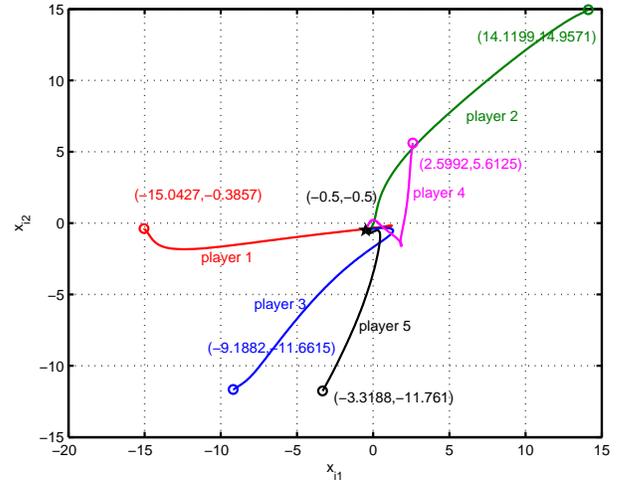}}
\caption{The plots of the players' state trajectories, i.e., the plots of $x_{i1}$ versus $x_{i2}$ for $i=\{1,2,\cdots,5\}$ generated by \eqref{met}.}\label{adaptive1_state}
\end{figure}

\begin{figure}[!htp]
\centering
\scalebox{0.55}{\includegraphics{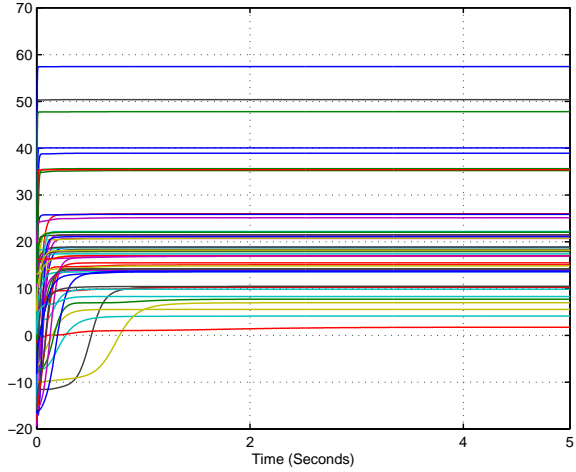}}
\caption{The trajectories of $\theta_{ij}$ for  $i,j\in \{1,2,\cdots,5\}$ generated by \eqref{met}.}\label{adaptive1_para}
\end{figure}

\begin{figure}[!htp]
\centering
\scalebox{0.55}{\includegraphics{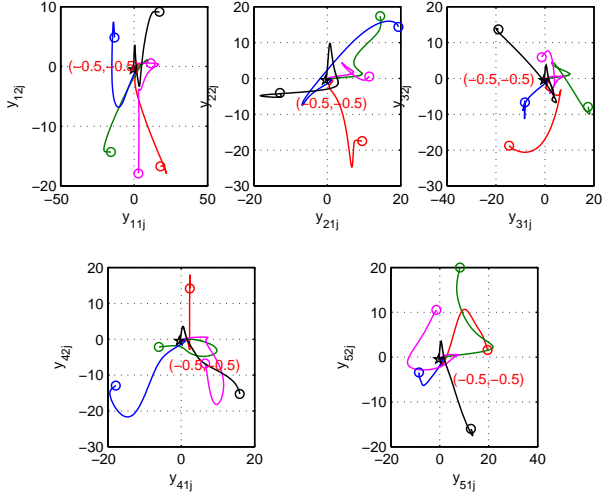}}
\caption{The plots of $y_{ij1}$ versus $y_{ij2}$ for $i,j\in\{1,2,\cdots,5\}$ generated by \eqref{met}, respectively.}\label{adaptive1_esti}
\end{figure}

\subsection{The edge-based adaptive Nash equilibrium seeking strategy under a fixed communication graph}
In this section, we simulate the edge-based adaptive Nash equilibrium seeking algorithm given in \eqref{meth2}. Correspondingly, the communication graph is given in Fig. \ref{graph} (a).

In the simulation, $\mathbf{x},$ $\mathbf{y}$ are initialized randomly between $-20$ and $20.$ Moreover, $c_{ij},\bar{c}_{ij}$ for $i,j\in\{1,2,\cdots,5\}$ are initialized at $1$. Fig. \ref{adaptive2_state} plots the sensors' position  trajectories. The horizontal and vertical coordinates of  Fig. \ref{adaptive2_state} represents the values of $x_{i1}$ and $x_{i2}$ for $i\in\{1,2,\cdots,5\},$ respectively. From Fig.  \ref{adaptive2_state}, it can be seen that the sensors' position trajectories converge to the actual Nash equilibrium of the game.

Moreover, Fig. \ref{adaptive2_para} plots $c_{ij}$ and $\bar{c}_{ij}$ from which it can be seen that they converge to some finite values.
The $5$ sub-figures in Fig. \ref{adaptive2_esti} plot $y_{ij1}$ versus $y_{ij2}$, $j\in\{1,2,\cdots,5\}$ for players $1$-$5,$ respectively. From Fig.  \ref{adaptive2_esti}, we see that $\mathbf{y}_i$ for all $i\in\{1,2,\cdots,5\}$ converge to the Nash equilibrium $\mathbf{x}^*$ as well. Hence, Theorem \ref{the2} is numerically verified.
\begin{figure}[!htp]
\centering
\scalebox{0.55}{\includegraphics{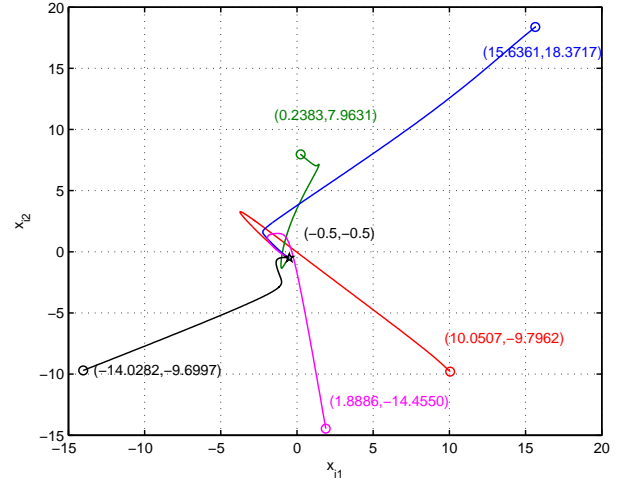}}
\caption{The plots of the players' state trajectories, i.e., the plots of $x_{i1}$ versus $x_{i2}$ for $i=\{1,2,\cdots,5\}$, generated by \eqref{meth2}.}\label{adaptive2_state}
\end{figure}

\begin{figure}[!htp]
\centering
\scalebox{0.55}{\includegraphics{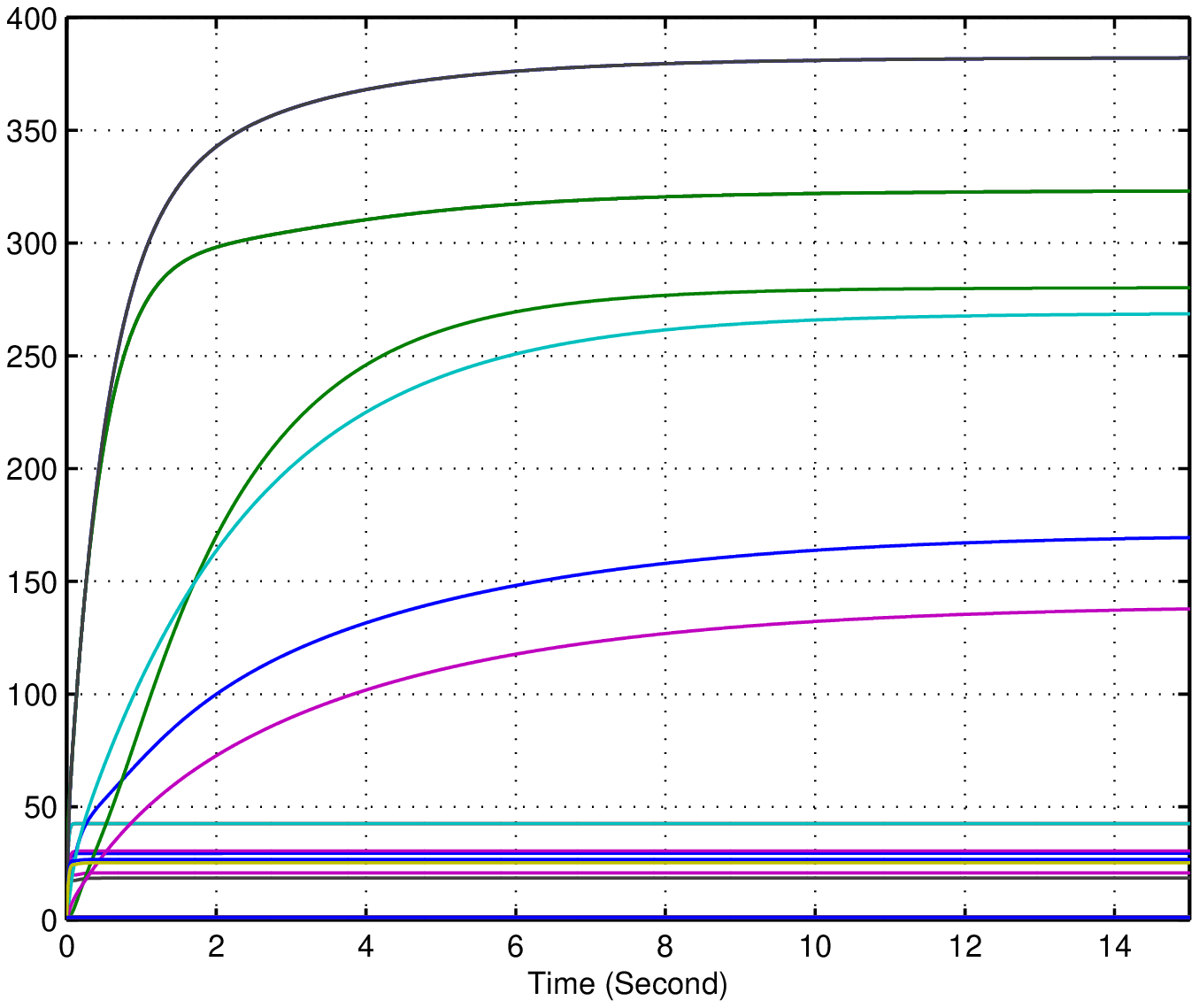}}
\caption{The trajectories of $c_{ij}$ and $\bar{c}_{ij}$ for  $i,j\in \{1,2,\cdots,5\}$ generated by \eqref{meth2}.}\label{adaptive2_para}
\end{figure}

\begin{figure}[!htp]
\centering
\scalebox{0.55}{\includegraphics{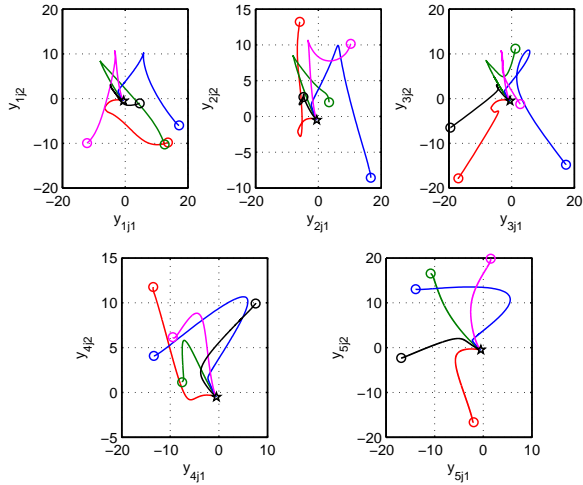}}
\caption{The plots of $y_{ij1}$ versus $y_{ij2}$ for $i,j\in\{1,2,\cdots,5\}$, generated by \eqref{meth2}, respectively.}\label{adaptive2_esti}
\end{figure}

\subsection{Fully distributed Nash equilibrium seeking under switching communication topologies}
In this section, we simulate the adaptive Nash equilibrium seeking algorithm given in \eqref{met3} under switching communication conditions. Similar to the previous subsections, $\mathbf{x},$ $\mathbf{y}$ are initialized randomly between $-20$ and $20.$ Moreover, $c_{ij},\bar{c}_{ij}$ for $i,j\in\{1,2,\cdots,5\}$ are initialized at $1$. In addition, we suppose that for $t<0.5$ and $5<t<8,$ the players communicate via the graph in Fig. \ref{graph} (a). For other time sequences, the players communicate via the graph in Fig. \ref{graph} (b).
The simulation results are given in Figs. \ref{adaptive3_state}-\ref{adaptive3_esti}.  Fig. \ref{adaptive3_state} plots the sensors' trajectories. The horizontal and vertical coordinates of  Fig. \ref{adaptive3_state} are the values of $x_{i1}$ and $x_{i2}$ for $i\in\{1,2,\cdots,5\},$ respectively. From Fig.  \ref{adaptive3_state}, it can be seen that the sensors' position trajectories converge to the actual Nash equilibrium of the game. Fig. \ref{adaptive3_para} shows $c_{ij}$ and $\bar{c}_{ij}$ from which it can be seen that they converge to some finite values. Moreover, the $5$ sub-figures in Fig. \ref{adaptive3_esti} plot $y_{ij1}$ versus $y_{ij2}$, $j\in\{1,2,\cdots,5\}$ for players $1$-$5,$ respectively. From Fig.  \ref{adaptive3_esti}, we see that $\mathbf{y}_i$ for all $i\in\{1,2,\cdots,5\}$ converge to the Nash equilibrium $\mathbf{x}^*$. Hence, Theorem \ref{the3} is numerically verified.

\begin{figure}[!htp]
\centering
\scalebox{0.55}{\includegraphics{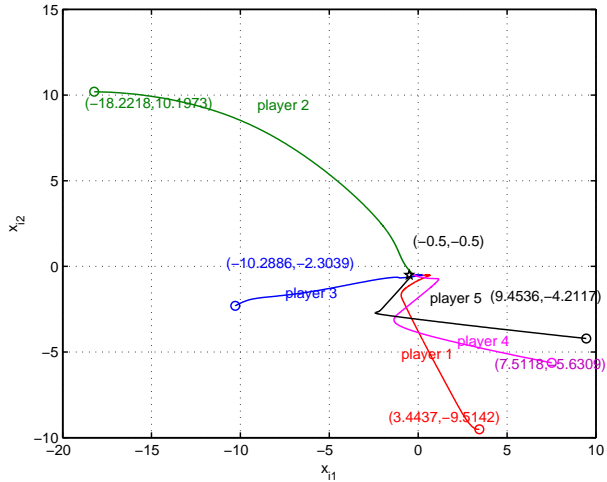}}
\caption{The plots of the players' state trajectories, i.e., the plots of $x_{i1}$ versus $x_{i2}$ for $i=\{1,2,\cdots,5\}$, generated by \eqref{met3}.}\label{adaptive3_state}
\end{figure}

\begin{figure}[!htp]
\centering
\scalebox{0.55}{\includegraphics{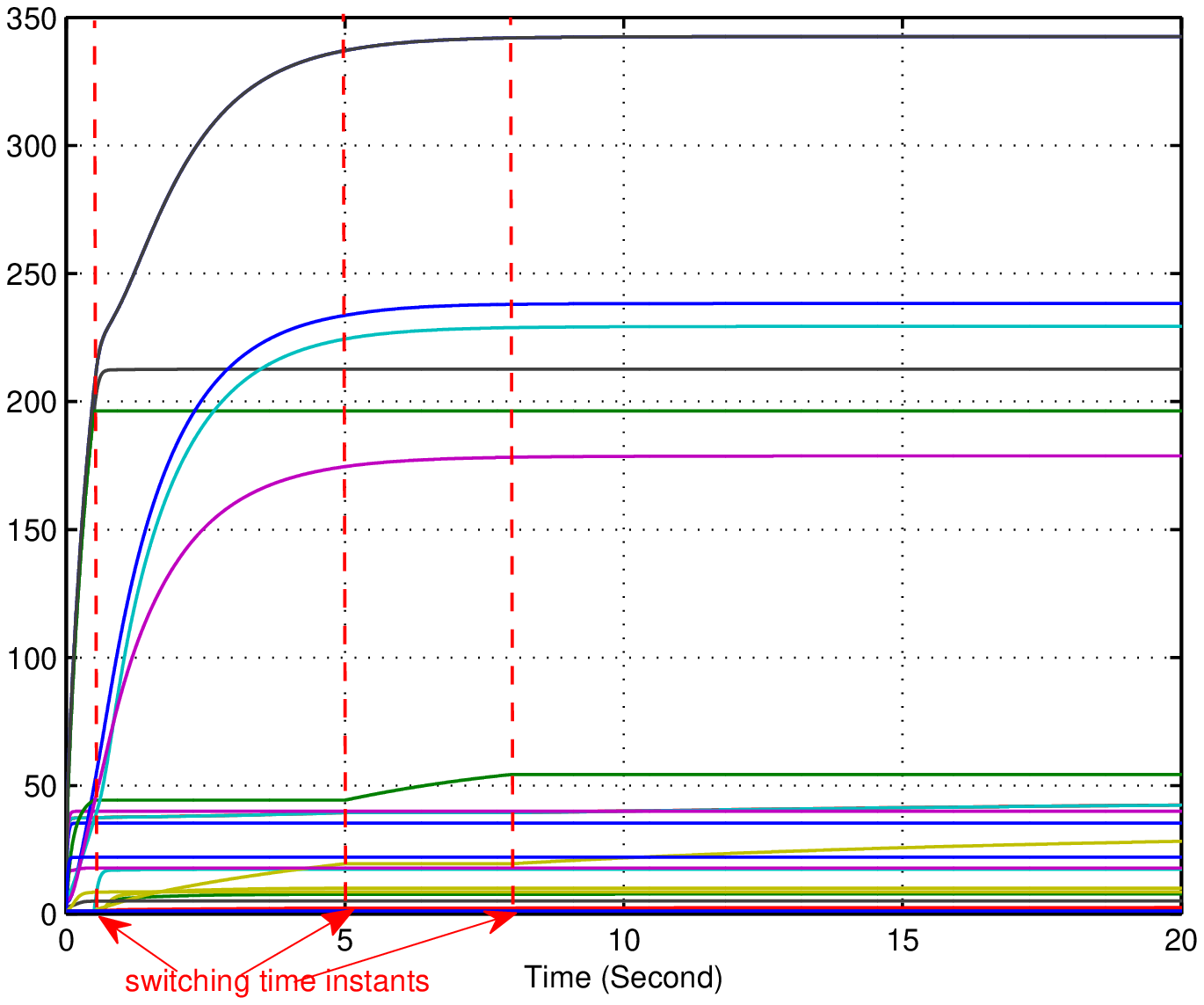}}
\caption{The trajectories of $c_{ij}$ and $\bar{c}_{ij}$ for  $i,j\in \{1,2,\cdots,5\}$ generated by \eqref{met3}.}\label{adaptive3_para}
\end{figure}

\begin{figure}[!htp]
\centering
\scalebox{0.55}{\includegraphics{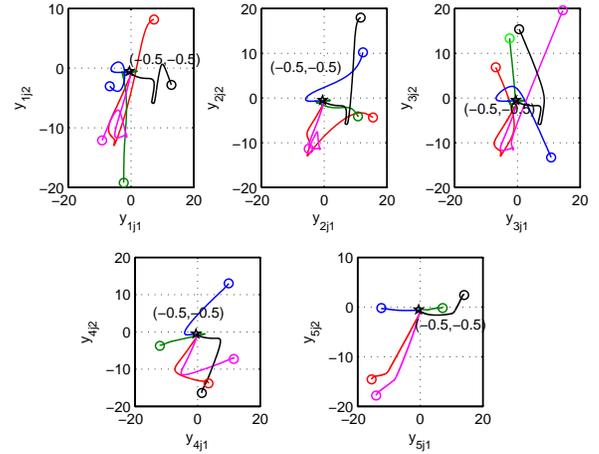}}
\caption{The plots of $y_{ij1}$ versus $y_{ij2}$ for $i,j\in\{1,2,\cdots,5\}$, generated by \eqref{met3},  respectively.}\label{adaptive3_esti}
\end{figure}
\section{Conclusions}\label{conc}
This paper develops fully distributed Nash equilibrium seeking algorithms for networked game. A node-based adaptive algorithm and an edge-based adaptive algorithm are proposed. The node-based adaptive algorithm achieves the distributed strategy design via adjusting the weight on each player's accessible consensus error adaptively. The edge-based adaptive algorithm dynamically adjusts the weights on the edges of the communication graph to achieve fully distributed Nash equilibrium seeking. Both algorithms are theoretically validated via Lyapunov stability analysis. In addition, to highlight the extensibility of the proposed method under time-varying communication networks, the edge-based algorithm is adapted to accommodate switching communication conditions in which the communication network switches among a set of undirected and connected communication graphs.

\end{document}